\newtheorem{thm}{Theorem}
\newtheorem{lem}{Lemma}
\newtheorem{defin}{Definition}
\newtheorem{prop}{Proposition}
\title{Non-integrable supersymmetries and their classification for $\mathfrak{gl}(1,1)$ and $\mathfrak{sl}(1,1)$ }
\author{M. Kalus\footnote{Research supported by the SFB/TR 12, Symmetry and Universality in Mesoscopic Systems, of the Deutsche Forschungsgemeinschaft.}}
\date{}
\begin{document}
\maketitle
%--------------------------------------------------------------------------
% ABSTRACT
%--------------------------------------------------------------------------
\begin{abstract}
\noindent
Infinitesimal supersymmetries over classical Lie groups that do not necessarily integrate to Lie supergroups are described. They yield a notion of supersymmetry that is less rigid than the assumption of a Lie supergroup action but still implies an underlying action of a Lie group. In contrast to Lie supergroups, the arising representation-theoretical Lie supergroups (RTLSG) occur as families associated to  Harish-Chandra superpairs. However morphisms of RTLSGs directly correspond to morphisms of Harish-Chandra superpairs. Particular RTLSGs can be derived from the explicit constructions of Lie supergroups given by Berezin and Kostant. The Lie superalgebras $\mathfrak{gl}(1,1)$ or  $\mathfrak{sl}(1,1)$ appearing also in higher dimensional classical Lie superalgebras,  provide interesting first examples of RTLSGs. A classification of RTLSGs associated to real and complex $\mathfrak{gl}(1,1)$- and  $\mathfrak{sl}(1,1)$-Harish-Chandra superpairs is given by parameter spaces and complete sets of invariants. The underlying Lie group is assumed to be connected but possibly not simply connected. 
\end{abstract}
\setcounter{tocdepth}{2}

%--------------------------------------------------------------------------
% SECTION I
%--------------------------------------------------------------------------
\section{Introduction}

\noindent
Advancing bosonic systems with symmetry Lie group $G$ leads to the study of fields contained in the symmetric powers of a representation space of $G$. This representation space is e.g.~the space of sections of a $G$-homogeneous vector bundle.  Extending a bosonic system to fermions, antisymmetric powers of a representation space of $G$ are added.\footnote{In most examples $G$ is close to a product of two groups each factor acting on bosonic and fermionic part respectively and trivial on the complement.} The infinitesimal version of this action is given by the representation of the associated Lie algebra $\mathfrak g_{\bar 0}=Lie(G)$ of left invariant derivations. Here the representation of $\mathfrak g_{\bar 0}$ on the algebra of fields is equivariant with respect to the adjoint action of $G$ on $\mathfrak g_{\bar 0}$. In particular the representation $\mathfrak g_{\bar 0} \to Der(\mathcal C^\infty_G)$, resp. in the complex case $\mathfrak g_{\bar 0} \to Der(\mathcal O_G)$ is equivariant.

\bigskip\noindent
Supersymmetry in the above setting is usually understood as the presence of an action of a Lie supergroup $\mathcal G$ over $G$ associated to a Lie superalgebra $\mathfrak g=\mathfrak g_{\bar 0}\oplus \mathfrak g_{\bar 1}$. A Lie supergroup can be realized as a supermanifold $\mathcal G=(G,\mathcal F)$ with sheaf of superfunctions $\mathcal F=\mathcal C^\infty_G \otimes \Lambda \mathfrak g_{\bar 1}^\ast$, resp. $\mathcal F=\mathcal O_G \otimes \Lambda \mathfrak g_{\bar 1}^\ast$  and morphisms for multiplication and inverse. The infinitesimal version of a Lie supergroup action is a representation of the Lie superalgebra $\mathfrak g$ being again equivariant with respect to the adjoint action of the underlying Lie group $G \subset \mathcal G$. Observing the supergroup itself, the analogue $G$-equivariance of the representation $\rho_{\bar 0}:\mathfrak g_{\bar 0} \to Der_{\bar 0}(\mathcal F)$ holds. Explicitly -- in addition to the representation of $\mathfrak g_{\bar 0}$ on numerical functions mentioned above -- the representation on the odd derivatives is given by $X.\alpha=\alpha([\cdot,X])$ for $X\in \mathfrak g_{\bar 0}$ and $\alpha \in \mathfrak g^\ast_{\bar 1}$. 

\bigskip\noindent
Starting from the symmetric situation, the assumption of supersymmetry is rather rigid by requiring the presence of a Lie supergroup action. Here this assumption is released nevertheless keeping the underlying Lie group structure and allowing infinitesimal generators of odd symmetries -- the later forming a Lie superalgebra $\mathfrak g$ together with the infinitesimal even symmetries. This Lie superalgebra is represented with $\rho:\mathfrak g \to Der(\mathcal F)$ on the sheaf of  superfunctions $\mathcal F$ on the supermanifold $\mathcal G$. Furthermore assumed are the natural assumption of $G$-equivariance of this representation forcing its even part to be $\rho_{\bar 0}$, and the following transitivity condition:  the common kernel of $\rho(\mathfrak g)$ is the constant numerical functions. The arising objects are called representation-theoretical Lie supergroups (RTLSG).

\bigskip\noindent
Higher dimensional Lie superalgebras such as $\mathfrak{gl}(n,m)$, $\mathfrak{sl}(n,m)$ or $\mathfrak{osp}(n,m)$ are constructed from the Lie subsuperalgebras associated to their odd root spaces being exactly those of type $\mathfrak{sl}(1,1)$ and $\mathfrak{osp}(1,2)$ (see \cite{Kac}). So the first step in the analysis of RTLSGs of higher dimension is the classification of $\mathfrak{gl}(1,1)$-, resp. $\mathfrak{sl}(1,1)$-RTLSGs.

\bigskip\noindent
\textbf{Contents.} The second section contains the rigorous definition of RTLSGs and their morphisms. Parameter spaces being non-singular algebraic varieties are given for $\mathfrak{gl}(1,1)$ and $\mathfrak{sl}(1,1)$. In particular a complete set of invariants parametrizing the isomorphy classes of RTLSG structures is listed. $G$-equivariant Batchelor models of Lie supergroups provide special but not all examples of RTLSGs. 

\bigskip\noindent
The parameter space for $\mathfrak{gl}(1,1)$ is determined in the third sections yielding a system of 25 equations in 16 variables. As it turns out the parameter space is a 6-dimensional algebraic variety $\mathcal V_{\mathfrak {gl}}$ with generically 3-dimensional orbits of the automorphism group of the associated Harish-Chandra superpair. 

\bigskip\noindent
In section four the $\mathfrak{sl}(1,1)$-RTLSG structures are extended to $\mathfrak{gl}(1,1)$-RTLSGs yielding the points in a subvariety $\mathcal V_{\mathfrak {sl}}\subset \mathcal V_{\mathfrak {gl}}$. Automorphisms of $\mathfrak{sl}(1,1)$-RTLSGs are also extendable to $\mathfrak{gl}(1,1)$-RTLSGs and guarantee a classification analog to the $\mathfrak{gl}(1,1)$ case. 

\bigskip\noindent
Finally section five concerns RTLSG structures over non-simply connected Harish-Chandra superpairs. The classification for $\mathfrak{sl}(1,1)$-RTLSGs remains unchanged from the simply connected case while the generic $\mathfrak{gl}(1,1)$-case differs. An example for the case of first homotopy $\mathbb Z^2$ of the underlying Lie group is added.

\bigskip\noindent
\textbf{Preliminaries.} The RTLSGs discussed here are defined in the smooth and in the holomorphic case. So the field $\mathbb K$ will denote $\mathbb R$ or $\mathbb C$ and the sheaf of numerical functions on a Lie group $G$ is denoted by $\mathcal C^\infty_G$, resp. $\mathcal O_G$. The underlying Lie groups are assumed to be connected. Provided a $\mathbb Z$-grading on  superfunctions is fixed, the projection onto the $k$-th $\mathbb Z$-degree part followed by evaluation at the neutral element $e$ of the underlying Lie group is denoted by $ev_e^k$. Furthermore $\mathfrak S_k$ denotes the group of permutations of $k$ elements.

\bigskip\noindent
The Lie superalgebra $\mathfrak{gl}(1/1)$ can be represented by $2 \times 2$-matrices 
with entries in $\mathbb K$. The bracket is the supercommutator. For later calculation fix 
\begin{align}\label{mat}
A= \begin{pmatrix} 1 & 0 \\ 0 & 0 \end{pmatrix}, \  B= \begin{pmatrix} 0 & 0 \\ 0 & 1 \end{pmatrix}  \in \mathfrak{gl}(1,1)_{\bar 0},  \quad \mbox{ and } \quad 
C= \begin{pmatrix} 0 & 1 \\ 0 & 0 \end{pmatrix}  D= \begin{pmatrix} 0 & 0 \\ 1 & 0 \end{pmatrix}\  \in \mathfrak{gl}(1,1)_{\bar 1}
\end{align}
The Lie superalgebra $\mathfrak{sl}(1,1)$ is spanned by $A+B$, $C$ and $D$. 
Furthermore let $f_{n,m}$, $n,m \in \mathbb Z$ be the unique numerical functions on the underlying Lie group near the neutral element $e$ that satisfy
\begin{align}\label{eq:306}
&A.f_{n,m}=n \cdot f_{n,m}, \quad B.f_{n,m}=m \cdot f_{n,m} \quad \mbox{ and } \quad  f_{n,m}(e)=1 
\end{align} 
and let $C^\ast$ and $D^\ast$ denote the dual basis to $\{C,D\}$.

\bigskip\noindent
The definition of $\mathfrak{gl}(1,1)$-RTLSG structures and their parametrization in the simply connected case is a part of the PhD thesis of the author (see \cite{Dis}).\footnote{An ``RTLSG`` in the notion of this paper is called ``planed LRT Lie supergroup'' in \cite{Dis}.}

\bigskip\noindent
\textbf{Acknowledgments.} The author thanks A. Huckleberry for bringing up the problem of non-integrable supersymmetries and for many valuable discussions.

%--------------------------------------------------------------------------
% SECTION II
%--------------------------------------------------------------------------
\section{Representation-theoretical Lie supergroups}
\noindent
Formalizing the intention stated in the introduction yields the following definition:
\begin{defin}
A representation-theoretical Lie supergroup (RTLSG) is a tuple $(G,\mathfrak g,\mathcal F,\rho)$ where
\begin{itemize}
 \item $(G,\mathfrak g)$ is a Harish-Chandra superpair\footnote{Requires $\mathfrak g_{\bar 0}=Lie(G)$ and integrability of the adjoint representation of $\mathfrak g_{\bar 0}$ on $\mathfrak g_{\bar 1}$.}
 \item $(G,\mathcal F)$ is the trivialized supermanifold with $\mathcal F=\mathcal C^\infty_G \otimes \Lambda \mathfrak g_{\bar 1}^\ast$, resp. $\mathcal O_G \otimes \Lambda \mathfrak g_{\bar 1}^\ast$ 
 \item $\rho:\mathfrak g \to \mbox{Der}(\mathcal F)$ is a Lie superalgebra representation such that $\rho|_{\mathfrak g_{\bar 0}}=\rho_{\bar 0}$ is represented \begin{itemize} \item[(i)] on numerical functions by left-invariant derivations \item[(ii)] on $\alpha \in \mathfrak g_{\bar 1}^\ast$ by $X.\alpha=\alpha([\cdot, X])$ \end{itemize}
\end{itemize}
 satisfying transitivity:
 \begin{align}\label{Trans}
  & \mbox{If }\ f \in \mathcal F \mbox{ with }X.f\equiv 0 \ \ \forall \ X \in \mathfrak g \mbox{ then } f \in \mathbb K .
 \end{align}
A morphism of RTLSGs $(G,\mathfrak g,\mathcal F,\rho)\to (G^\prime,\mathfrak g^\prime,\mathcal F^\prime,\rho^\prime)$ is a morphism of Harish-Chandra superpairs $(\Psi,\psi):(G,\mathfrak g)\to (G^\prime,\mathfrak g^\prime)$ inducing the morphism of supermanifolds $$(\Psi,\psi^\#):(G,\mathcal F) \to (G^\prime,\mathcal F^\prime)\quad  \mbox{ via }\quad \begin{array}{ll} \psi^\#(f):=\Psi^\ast(f) &\mbox{ for } f \in \mathcal C^\infty_{G^\prime} \ (\mbox{resp. }\mathcal O_{G^\prime}) \\ \psi^\#(\alpha):=\alpha \circ \psi|_{\mathfrak g_{\bar 1}} &\mbox{ for } \alpha \in  (\mathfrak g_{\bar 1}^\prime)^\ast \end{array}$$ such that $\rho(X)\circ \psi^\#=\psi^\# \circ \rho^\prime(\psi(X))$ for all $X \in \mathfrak g$.\footnote{This is automatically satisfied for $X \in \mathfrak g_{\bar 0}$.}  Abbreviate $\rho(X)(f)=X.f$.
\end{defin}

\noindent
\textbf{Example 1:} The construction of a Lie supergroup $\mathcal G$ by Kostant (see \cite{Kost}) provides an explicit   identification of superfunctions on the supermanifold $\mathcal G$ with $\mathcal C^\infty_G \otimes \Lambda \mathfrak g_{\bar 1}^\ast$ (see \cite{Kosz}) via dualization of the Lie-Hopf operator superalgebra. The resulting representation of the Lie superalgebra on the superfunctions yields an RTLSG structure. This is shown in the authors dissertation (see \cite{Dis}). The example in the $\mathfrak{gl}(1,1)$ case is given in section \ref{subsec} of this article. Note that different identifications of superfunctions with $\mathcal C^\infty_G \otimes \Lambda \mathfrak g_{\bar 1}^\ast$ may yield different RTLSG structures. For example it is shown in \cite{Dis} that the RTLSG structure induced by Berezin's construction of a Lie supergroup (see \cite{Ber}) via analytic continuation on Grassmann variables is not isomorphic to the first one. 

\bigskip\noindent 
The example may raise the suspicion that any RTLSG structure can be viewed as a Batchelor model $\mathcal G \to (G,\mathcal F)$ of the Lie supergroup associated to the Harish-Chandra superpair $(G,\mathfrak g)$ satisfying the condition that the induced representation of $\mathfrak g$ on $\mathcal F$ restricts to $\rho_0$. Briefly: that RTLSG structures are $G$-equivariant Batchelor models of the Lie supergroup. In the end of section \ref{subsec} this suspicion is ruled out for the case of $\mathfrak{gl}(1,1)$-RTLSG. But it holds for $\mathfrak{sl}(1,1)$ due to the trivial representation of the even part of $\mathfrak{sl}(1,1)$ on the odd part. This is shown in the end of section \ref{secg}.

\bigskip\noindent 
The classification of RTLSG-structures in general depends on the particular Harish-Chandra superpair $(G,\mathfrak g)$ for a fixed Lie superalgebra $\mathfrak g$. In the case of $\mathfrak{gl}(1,1)$ the underlying Lie group is a quotient of $\mathbb K^2$. It hence is of the form $G=\mathbb K^2/\Gamma$ where $\Gamma$ is a lattice of maximal rank $2$, resp. $4$. Note that not all quotient groups $\mathbb K^2/\Gamma$ yield Harish-Chandra superpairs since the lattice may prevent the adjoint representation of $\mathfrak g_{\bar 0}$ on $\mathfrak g_{\bar 1}$ from integration. The quotient also shrinks the group of automorphisms of Harish-Chandra superpairs but it does not interfere with the existence of RTLSG structures as will be shown. In the case of $rk(\Gamma)\leq 1$ an injective parametrization of $\mathfrak{gl}(1,1)$ and $\mathfrak{sl}(1,1)$-RTLSG structures is given. For the remaining cases a convenient set of invariants whose values  distinguish the isomorphy classes and parametrize them up to a discrete group action are provided.
The results  are summarized in the following Theorem:

\begin{thm}\label{thm:res} Let $G=\mathbb K^2/ \Gamma$, resp. $\mathbb K/\Gamma$ be the underlying Lie group of a $\mathfrak{gl}(1,1)$, resp. $\mathfrak{sl}(1,1)$-Harish-Chandra superpair. This classifies the associated RTLSGs:
\begin{itemize}
 \item[(a)] (Case $rk(\Gamma)=0$.) The parameter space of $\mathfrak{gl}(1,1)$-RTLSG structures can be realized as a 6-dimensional algebraic submanifold $\mathcal V_{\mathfrak{gl}}$ of degree $3$ in $\mathbb K^{8}$ including a 5-dimensional submanifold $\mathcal V_{\mathfrak{sl}}$. The orbits of the automorphism group on $\mathcal V_{\mathfrak{gl}}\backslash \mathcal V_{\mathfrak{sl}}$ are generically 3-dimensional and injectively parametrized up to a $\mathfrak S_2$-action by the values of the following invariants:
 \begin{align}\label{inv1}
 \begin{array}{l}
I_1:=ev_e^0\left((C.D^\ast)\cdot (D.C^\ast) \right):\mathcal V_{\mathfrak {gl}}\to \mathbb K \\ 
I_2:=ev_e^0(\left(C.C^\ast \right), ev_e^0\left(D.D^\ast \right)):\mathcal V_{\mathfrak {gl}}\to \mathbb K^2
 \end{array} 
\end{align}
The $\mathfrak S_2$-action is given by interchanging the components of $I_2$.
 \item[(b)]  (Case $rk(\Gamma)=0$.) The parameter space of $\mathfrak{sl}(1,1)$-RTLSG can be identified with $\mathcal V_{\mathfrak{sl}}$ by extension of the RTLSG structure. The automorphism group of $\mathfrak{sl}(1,1)$-RTLSGs yields the same orbit structure as for $\mathfrak{gl}(1,1)$. They are generically 1-dimensional and are injectively parametrized up to a $\mathfrak S_2$-action by the values of the restriction to $\mathcal V_{\mathfrak {sl}}$ of the invariants $I_1$ and $I_2$ in (\ref{inv1}) and of \footnote{In particular the subspaces in the definition of the following map are 1-dimensional and the restricted map is an invariant up to the $\mathfrak S_2$-action on $\mathcal V_{\mathfrak {sl}}$.}
  \begin{align}\label{inv2}
  &I_f:\mathcal V_{\mathfrak {gl}}\to \mathbb P^1(\mathbb K) \nonumber \\
&I_f:=\mbox{span}\left\{\left(\begin{array}{c}ev_e^0(C.C^\ast)\cdot ev_e^2(D^\ast D.f)\\ ev_e^0(D.D^\ast)\cdot ev_e^2(C^\ast C.f) \end{array}\right) , \left(\begin{array}{c}ev_e^0(C.D^\ast)\cdot ev_e^2(C^\ast D.f)\\ ev_e^0(D.C^\ast)\cdot ev_e^2(D^\ast C.f) \end{array}\right) \right\}
\end{align}
with $f=f_{1,0}$. The $\mathfrak S_2$-action is given by interchanging the components of $I_2$ and $I_{f}$ simultaneously.
\item[(c)] (Case $rk(\Gamma) \neq 0$.) All structures in (a) and (b) pass the quotient. We have for the lattice\footnote{The second summand is zero in the real or $\mathfrak{sl}(1,1)$-case.} $\Gamma \subset \mathbb C (A+B)+ \pi i\mathbb Z (A-B)$ and so in particular $rk({\Gamma})\leq \mbox{dim}_\mathbb R(\mathbb K^2)-1$. The classification of structures in $\mathcal V_{\mathfrak{sl}}$ still holds. In $\mathcal V_{\mathfrak{gl}}\backslash \mathcal V_{\mathfrak{sl}}$ assuming $rk(\Gamma) = 1$ with generator $g=\nu (A+B)+ \pi ik (A-B)$ the $\mathfrak{gl}(1,1)$-RTLSG structures are injectively parametrized up to a $\mathfrak S_2$-action by the values of the restriction to $\mathcal V_{\mathfrak{gl}}\backslash \mathcal V_{\mathfrak{sl}}$ of the invariants $I_1$, $I_2$  in (\ref{inv1}) and of $I_{f_g}$ defined as in (\ref{inv2}) with 
\begin{align}\label{inv12}
 f_g=(\nu- \pi ik)f_{1,0}-(\nu+ \pi ik)f_{0,1} \ .
\end{align}
The $\mathfrak S_2$-action is given by interchanging the components of the value of $I_2$ and $I_{f_g}$ simultaneously.
\item[(d)] (Case $rk(\Gamma) \neq 0$.) The isomorphism classes of RTLSG structures in $\mathcal V_{\mathfrak{gl}}\backslash \mathcal V_{\mathfrak{sl}}$ for $rk(\Gamma)\in\{2,3\}$ are injectively parametrized up to a discrete group action by the values of the restriction to $\mathcal V_{\mathfrak{gl}}\backslash \mathcal V_{\mathfrak{sl}}$ of the invariants $I_1$ and $I_2$  in (\ref{inv1}) and $I_{f_{g}}$ and $I_{f_h}$ defined by (\ref{inv2}) and (\ref{inv12}) for two independent $g,h \in \Gamma$.
\end{itemize}
\end{thm}

\noindent
An example for the classification in case \textit{(d)} in the Theorem is given in the end of section \ref{sec1c}. In particular it shows that the acting  discrete group comes from extendable automorphisms of $\Gamma$ and may not be finite.

\bigskip\noindent 
\textbf{Remark.} Consider the {deformation of an RTLSG} $(G,\mathfrak g,\mathcal F,\rho)$ to be a pair $((\mathcal W,q),R)$ of a pointed complex manifold and a smooth map of Fr\'echet manifolds $R:\mathcal W \times \mathfrak g \to \mbox{Der}(\mathcal F)$  such that $(G,\mathfrak g,\mathcal F,R|_{\{z\} \times \mathfrak g})$ is an RTLSG for all $z \in \mathcal W$ and $R|_{\{q\} \times \mathfrak g}=\rho$. Note that Theorem \ref{thm:res} yields a deformation covering all possible RTLSG structures starting e.g.~from a structure induced by the Lie supergroup. It will be shown in the end of section \ref{subsec} for $\mathfrak{gl}(1,1)$ and $\mathfrak{sl}(1,1)$ that this deformation is connected in the complex case and has two components in the real case.

%--------------------------------------------------------------------------
% SECTION III
%--------------------------------------------------------------------------
\section{Parametrization of the $\mathfrak{gl}(1,1)$-structures}\label{secs}

\noindent
Classifying RTLSG structures is classifying the possible representations $\rho$ up to isomorphy. This is done for $\mathfrak{gl}(1,1)$ here. 
Until section \ref{sec1c} the underlying Lie group is assumed to be simply connected hence the Harish-Chandra superpair considered is $(\mathbb K^2,\mathfrak{gl}(1,1))$. 
 It is $[\mathfrak g_{\bar 0},\mathfrak g_{\bar 0}]=0$ and with the notation in (\ref{mat}) 
\begin{align}
&[A,C]=-[B,C]=C \quad \mbox{and} \quad -[A,D]=[B,D]=D, \label{eq:303} \\
&[C,D]=A+B \quad \mbox{and} \quad [C,C]=[D,D]=0. \nonumber
\end{align}
The representation of $\mathfrak g_{\bar 0}$ on $\mathfrak g_{\bar 1}^\ast$ is given by
\begin{align}
& -A.C^\ast=B.C^\ast=C^\ast  , \qquad A.D^\ast=-B.D^\ast=D^\ast,  \label{eq:305} 
\end{align}
and finally $A.(C^\ast \wedge D^\ast)= B.(C^\ast \wedge D^\ast)=0$. 
%--------------------------------------------------------------------------------------
\subsection{The 16 structural Constants}
Using (\ref{eq:303}) and  (\ref{eq:306}) it follows that 
\begin{align*}
&C.f_{n,m}=[A,C].f_{n,m}=(AC-CA).f_{n,m}=A.(C.f_{n,m})-nC.f_{n,m} \\  
& \qquad \Rightarrow \ A.(C.f_{n,m})=(n+1) \cdot C.f_{n,m} \ \ \mbox{and} \\
&C.f_{n,m}=-[B,C].f_{n,m}=-(BC-CB).f_{n,m}=-B.(C.f_{n,m})+mC.f_{n,m} \\ 
&\qquad \Rightarrow \ B.(C.f_{n,m})=(m-1) \cdot C.f_{n,m} \ .
\end{align*} 
Thus $C.f_{n,m}$ lies in the eigen space of $A$ of the eigenvalue $n+1$ and of $B$ for the eigenvalue $m-1$. 
Since $C.f_{n,m}$ is of the form $g_1 C^\ast+g_2 D^\ast$ for $g_1,g_2 \in \mathcal C^\infty_G$, resp. $\mathcal O_G$, 
it follows that $g_1$ is a scalar multiple of $f_{n+2,m-2}$ and $g_2$ of $f_{n,m}$. 
A similar calculation holds for $D.f_{n,m}$. With eight scalar constants 
$c_{C^\ast}^z,c_{C^\ast}^w,c_{D^\ast}^z,c_{D^\ast}^w,d_{C^\ast}^z,d_{C^\ast}^w,d_{D^\ast}^z$ and $d_{D^\ast}^w$ denote 
\begin{align}\label{const1}
&C.f_{1,0}=c_{C^\ast}^z f_{3,-2}C^\ast+c_{D^\ast}^zf_{1,0}D^\ast && \mbox{and} 
&& C.f_{0,1}=c_{C^\ast}^w f_{2,-1}C^\ast+c_{D^\ast}^wf_{0,1}D^\ast \ ,  \\
&D.f_{1,0}=d_{C^\ast}^z f_{1,0}C^\ast+d_{D^\ast}^zf_{-1,2}D^\ast && \mbox{and} \nonumber
&& D.f_{0,1}=d_{C^\ast}^w f_{0,1}C^\ast+d_{D^\ast}^wf_{-2,3}D^\ast\ . 
\end{align}
Now using (\ref{eq:303}) and (\ref{eq:305}) it follows that 
\begin{align*} 
&C.C^\ast=[A,C].C^\ast=(AC-CA).C^\ast=A.(C.C^\ast)+C.C^\ast, \nonumber \\ 
&C.C^\ast=-[B,C].C^\ast=-(BC-CB).C^\ast=-B.(C.C^\ast)+C.C^\ast,  \\
&C.D^\ast=[A,C].D^\ast=(AC-CA).D^\ast=A.(C.D^\ast)-C.D^\ast \qquad \mbox{and} \nonumber \\ 
&C.D^\ast=-[B,C].D^\ast=-(BC-CB).D^\ast=-B.(C.D^\ast)-C.D^\ast \nonumber 
\end{align*}
and an analogue calculation for $D$ leads to 
\begin{align*}
&A.(C.C^\ast)=0, \quad B.(C.C^\ast)=0,\quad A.(C.D^\ast)=2C.D^\ast \ \ \mbox{and} \ \ B.(C.D^\ast)=-2C.D^\ast \\ 
&A.(D.C^\ast)=-2 D.C^\ast, \quad B.(D.C^\ast)=2 D.C^\ast,\quad A.(D.D^\ast)=0 \ \ \mbox{and} \ \ B.(D.D^\ast)=0 \ .  
\end{align*}
So with eight constants 
$c_1^{C^\ast}$,$c_1^{D^\ast}$,
$c_\wedge^{C^\ast}$,$c_\wedge^{D^\ast}$,$d_1^{C^\ast}$,$d_1^{D^\ast}$,$d_\wedge^{C^\ast}$ and $d_\wedge^{D^\ast}$ it is 
\begin{align}\label{const2}
&C.C^\ast=c_1^{C^\ast}+c_\wedge^{C^\ast} C^\ast \wedge D^\ast && \mbox{and} 
&& C.D^\ast=f_{2,-2}(c_1^{D^\ast}+c_\wedge^{D^\ast} C^\ast \wedge D^\ast)\ , \\ \nonumber
&D.C^\ast=f_{-2,2}(d_1^{C^\ast}+d_\wedge^{C^\ast} C^\ast \wedge D^\ast) && \mbox{and} 
&& D.D^\ast=d_1^{D^\ast}+d_\wedge^{D^\ast} C^\ast \wedge D^\ast\ . \nonumber
\end{align}
A representation of $\mathfrak g$ on $\mathcal F$ by superderivations is uniquely determined by these 16 constants.
%--------------------------------------------------------------------------------------
\subsection{The 25 structural Conditions}\label{secu}
The conditions  which guarantee that a $\mathfrak g$-representation is defined are now determined in terms of the above parameters. From $[C,C]=0$ it follows that 
\begin{align*}
&0=[C,C].f_{1,0}=2C.C.f_{1,0} \ \Leftrightarrow \  
\left\{ \begin{array}{ll} 2c_{C^\ast}^zc_{D^\ast}^w-2c_{C^\ast}^zc_{D^\ast}^z+c_{C^\ast}^z c_\wedge^{C^\ast}+
c_{D^\ast}^z c_\wedge^{D^\ast}=0 & \ (i) \\ 
c_{C^\ast}^zc_1^{C^\ast}+c_{D^\ast}^zc_1^{D^\ast}=0 & \ (ii) \end{array} \right. \\
&0=[C,C].f_{0,1}=2C.C.f_{0,1} \ \Leftrightarrow \  \left\{ \begin{array}{ll} 2c_{C^\ast}^wc_{D^\ast}^w-2c_{C^\ast}^wc_{D^\ast}^z+c_{C^\ast}^wc_\wedge^{C^\ast}+c_{D^\ast}^wc_\wedge^{D^\ast}=0 & \ (iii) \\ 
c_{C^\ast}^wc_1^{C^\ast}+c_{D^\ast}^wc_1^{D^\ast}=0 & \ (iv) \end{array}   \right.
\end{align*}
Furthermore,
\begin{align*}
&0=[C,C].C^\ast=2C.C.C^\ast \ \Leftrightarrow \  \left\{ \begin{array}{l} c_\wedge^{C^\ast}c_1^{C^\ast}=0 \\ c_\wedge^{C^\ast}c_1^{D^\ast}=0 \end{array} \right.  && \begin{array}{l}  (v) \\ (vi) \end{array}  \\
&0=[C,C].D^\ast=2C.C.D^\ast \ \Leftrightarrow \  \left\{ \begin{array}{l} 2c_{C^\ast}^zc_1^{D^\ast}-2c_{C^\ast}^wc_1^{D^\ast}-c_\wedge^{D^\ast}c_1^{D^\ast}=0  \\ 2c_{D^\ast}^zc_1^{D^\ast}-2c_{D^\ast}^wc_1^{D^\ast}+c_\wedge^{D^\ast}c_1^{C^\ast}=0 \end{array}   \right. && \begin{array}{l}  (vii) \\ (viii) \end{array} 
\end{align*}
In addition, from $[D,D]=0$ it follows that 
\begin{align*}
&0=[D,D].f_{1,0}=2D.D.f_{1,0} \ \Leftrightarrow \  \left\{ \begin{array}{ll} 2d_{D^\ast}^zd_{C^\ast}^w-2d_{D^\ast}^zd_{C^\ast}^z+d_{C^\ast}^z d_\wedge^{C^\ast}+d_{D^\ast}^z d_\wedge^{D^\ast}=0 & \ (ix) \\ d_{C^\ast}^zd_1^{C^\ast}+d_{D^\ast}^zd_1^{D^\ast}=0 & \ (x) \end{array} \right. \\
&0=[D,D].f_{0,1}=2D.D.f_{0,1} \ \Leftrightarrow \  \left\{ \begin{array}{ll} 2d_{D^\ast}^wd_{C^\ast}^w-2d_{D^\ast}^wd_{C^\ast}^z+d_{C^\ast}^wd_\wedge^{C^\ast}+d_{D^\ast}^wd_\wedge^{D^\ast}=0 & \ (xi) \\ d_{C^\ast}^wd_1^{C^\ast}+d_{D^\ast}^wd_1^{D^\ast}=0 & \ (xii) \end{array}   \right.
\end{align*}
and
\begin{align*}
&0=[D,D].D^\ast=2D.D.C^\ast \ \Leftrightarrow \  \left\{ \begin{array}{l} 2d_{C^\ast}^wd_1^{C^\ast}-2d_{C^\ast}^zd_1^{C^\ast}-d_\wedge^{C^\ast}d_1^{D^\ast}=0  \\ 2d_{D^\ast}^wd_1^{C^\ast}-2d_{D^\ast}^zd_1^{C^\ast}+d_\wedge^{C^\ast}d_1^{C^\ast}=0 \end{array} \right. && \begin{array}{l}  (xiii) \\ (xiv) \end{array}   \\
&0=[D,D].D^\ast=2D.D.D^\ast \ \Leftrightarrow \  \left\{ \begin{array}{l} d_\wedge^{D^\ast}d_1^{C^\ast}=0 \\ d_\wedge^{D^\ast}d_1^{D^\ast}=0  \end{array}   \right. && \begin{array}{l}  (xv) \\ (xvi) \end{array} 
\end{align*}
From $[C,D]=A+B$ applied to $f_{1,0}$ and $f_{0,1}$ it follows that
\begin{align*}
&\begin{pmatrix}
c_\wedge^{C^\ast} & c_\wedge^{D^\ast}+2c_{C^\ast}^w-4c_{C^\ast}^z & 0 &2c_{C^\ast}^z &0&0&c_{C^\ast}^z & c_{D^\ast}^z \\
0&-2c_{C^\ast}^w&c_\wedge^{C^\ast}&c_\wedge^{D^\ast}-2c_{C^\ast}^z+4c_{C^\ast}^w&0&0&c_{C^\ast}^w&c_{D^\ast}^w \\
c_1^{C^\ast} & c_1^{D^\ast} &0&0&c_{C^\ast}^z&c_{D^\ast}^z&0&0 \\
0&0&c_1^{C^\ast}&c_1^{D^\ast}&c_{C^\ast}^w&c_{D^\ast}^w&0&0
\end{pmatrix} \\ & \cdot
\begin{pmatrix}
d_{C^\ast}^z &d_{D^\ast}^z& d_{C^\ast}^w & d_{D^\ast}^w & d_1^{C^\ast}& d_1^{D^\ast} & d_\wedge^{C^\ast} & d_\wedge^{D^\ast}
\end{pmatrix}^T=
\begin{pmatrix}
0 \\ 0 \\ 1 \\ 1 
\end{pmatrix} \qquad \qquad 
\begin{array}{l}
(xvii) \\ (xviii) \\ (xix) \\ (xx) 
\end{array}
\end{align*}
Applied to $C^\ast$ and $D^\ast$ yields
\begin{align*}
&\begin{pmatrix}
2c_{C^\ast}^w-2c_{C^\ast}^z&-c_\wedge^{C^\ast}&-c_1^{D^\ast}&0 \\
2c_{D^\ast}^w-2c_{D^\ast}^z+c_\wedge^{C^\ast}&0&c_1^{C^\ast}&0 
\end{pmatrix}  \cdot
\begin{pmatrix}
 d_1^{C^\ast}& d_1^{D^\ast} & d_\wedge^{C^\ast} & d_\wedge^{D^\ast}
\end{pmatrix}^T&&=
\begin{pmatrix}
0 \\ 0 
\end{pmatrix} \qquad  &&
\begin{array}{l}
(xxi) \\ (xxii) 
\end{array}\\
&\begin{pmatrix}
0&2d_{C^\ast}^z-2d_{C^\ast}^w-d_\wedge^{D^\ast}&0&-d_1^{D^\ast}\\
d_\wedge^{D^\ast}&2d_{D^\ast}^z-2d_{D^\ast}^w&0&d_1^{C^\ast}
\end{pmatrix}  \cdot
\begin{pmatrix}
 c_1^{C^\ast}& c_1^{D^\ast} & c_\wedge^{C^\ast} & c_\wedge^{D^\ast}
\end{pmatrix}^T&&=
\begin{pmatrix}
0 \\ 0 
\end{pmatrix}  \qquad &&
\begin{array}{l}
(xxiii) \\ (xxiv) 
\end{array}
\end{align*}

\bigskip\noindent
Finally it is necessary to check which condition guarantees (\ref{Trans}). Let $\widetilde{\mathcal F}$ denote the sheaf  of superfunctions which are in the kernel of all even derivations. From (\ref{eq:305}) and (\ref{eq:306}) it follows that
$$
\widetilde{\mathcal F}=\big\{d+ a \cdot f_{1,-1}C^\ast+b \cdot f_{-1,1}D^\ast+c \cdot C^\ast\wedge D^\ast  \ | \ a,b,c,d \in \mathbb K \big\} \ .
$$
Note that 
$$
C.(C^\ast\wedge D^\ast)=c_1^{C^\ast}D^\ast-f_{2,-2}c_1^{D^\ast}C^\ast \qquad \mbox{and} 
\qquad D.(C^\ast\wedge D^\ast)=f_{-2,2}d_1^{C^\ast}D^\ast-d_1^{D^\ast}C^\ast \ . 
$$
So (\ref{Trans}) can be reformulated as
\begin{align*}
&\begin{pmatrix} C.(f_{1,-1}C^\ast) &C.(f_{-1,1}D^\ast)\\
D.(f_{1,-1}C^\ast) &
D.(f_{-1,1}D^\ast)\end{pmatrix} 
\cdot \begin{pmatrix}a\\ b\end{pmatrix}\equiv\begin{pmatrix}0\\ 0\end{pmatrix} \qquad \Rightarrow \quad a=b=0\\
\mbox{ and not } \ \ & c_1^{C^{\ast}}=c_1^{D^{\ast}}=d_1^{C^{\ast}}=d_1^{D^{\ast}}=0
\end{align*}
which is by calculating the determinant equivalent to:
\begin{align*}
 \left. \begin{array}{c}
   (d_{C^\ast}^w-d_{C^\ast}^z+d_\wedge^{D^\ast})\cdot c_1^{C^\ast} +(c_{D^\ast}^w-c_{D^\ast}^z+c_\wedge^{C^\ast}) \cdot d_1^{D^\ast}  \\  \qquad \qquad \qquad  -(d_{D^\ast}^w-d_{D^\ast}^z+d_\wedge^{C^\ast})\cdot c_1^{D^\ast}-(c_{C^\ast}^w-c_{C^\ast}^z+c_\wedge^{D^\ast})\cdot d_1^{C^\ast}\neq 0 \\  \mbox{ or } \qquad det\begin{pmatrix}c_1^{C^\ast}&c_1^{D^\ast}\\ 
d_1^{C^\ast}&d_1^{D^\ast}\end{pmatrix}\neq 0
\end{array}\qquad  \right\} \ \qquad (xxv) 
\end{align*}
%--------------------------------------------------------------------------------------
\subsection{The possible Structures}\label{subsec}
First the sets of allowed parameters are determined. 
Denote 
$$
M_{C}:=\begin{pmatrix}c_{C^\ast}^z&c_{D^\ast}^z\\c_{C^\ast}^w&c_{D^\ast}^w \end{pmatrix} \quad \mbox{and} \quad M_{D}:=\begin{pmatrix}d_{C^\ast}^z&d_{D^\ast}^z\\d_{C^\ast}^w&d_{D^\ast}^w \end{pmatrix} 
$$ 
From $det(M_{C})\neq 0$ and $det(M_{D})\neq 0$ it follows with $(ii)$,$(iv)$,$(x)$ and $(xii)$ that $c_1^{C^\ast}=c_1^{D^\ast}=d_1^{C^\ast}=d_1^{D^\ast}=0$ contradicting  
to the equations $(xix)$ and $(xx)$ which also excludes $M_C=M_D=0$. So:
\begin{lem}\label{lem:h15}
$det(M_C)=det(M_D)=0$.
\end{lem}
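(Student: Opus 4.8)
The plan is to prove $\det M_C=0$ by contradiction and to obtain $\det M_D=0$ from the symmetry of the whole problem under the automorphism $A\leftrightarrow B$, $C\leftrightarrow D$ of $\mathfrak{gl}_\mathbb K(1/1)$ (which also interchanges $z\leftrightarrow w$ and $C^\ast\leftrightarrow D^\ast$, carries $M_C$ to $P M_D P$ with $P=\left(\begin{smallmatrix}0&1\\1&0\end{smallmatrix}\right)$, hence $\det M_C\mapsto\det M_D$, and preserves the list of conditions $(i)$--$(xxv)$ as a set). The paragraph above already forbids both determinants from being nonzero simultaneously, so the essential new point is to exclude $\det M_C\neq 0=\det M_D$; as the non-definite structures mentioned there indicate, this is impossible only because of the definiteness inequality $(xxv)$, which must therefore enter the argument.

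First I would rewrite the relevant conditions as identities for the rows $r_C=(c_1^{C^\ast},c_1^{D^\ast})$ and $r_D=(d_1^{C^\ast},d_1^{D^\ast})$ of $M_1$: conditions $(ii),(iv)$ say $M_C\,r_C^{T}=0$; conditions $(x),(xii)$ say $M_D\,r_D^{T}=0$; and $(xix),(xx)$ say $M_D\,r_C^{T}+M_C\,r_D^{T}=(1,1)^{T}$. Assuming $\det M_C\neq 0$, the first relation forces $r_C=0$, so the third collapses to $M_C\,r_D^{T}=(1,1)^{T}$; since $(1,1)^{T}\neq 0$ and $M_C$ is invertible, $r_D=M_C^{-1}(1,1)^{T}$ is a nonzero vector.

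Now $r_C=0$ gives $\det M_1=0$, so the second alternative of $(xxv)$ fails and definiteness forces the first matrix $N$ of $(xxv)$ to be invertible. I would contradict this by showing $Nv=0$ for the nonzero vector $v=(d_1^{D^\ast},-d_1^{C^\ast})^{T}$. For the first coordinate, combining $(xxi)$ (which reads $c_\wedge^{C^\ast}d_1^{D^\ast}=2(c_{C^\ast}^w-c_{C^\ast}^z)d_1^{C^\ast}$ after $c_1^{D^\ast}=0$) with the difference $(xix)-(xx)$ yields $N_{11}\,d_1^{D^\ast}=(c_{C^\ast}^w-c_{C^\ast}^z)\,d_1^{C^\ast}$, while $(xxiv)$ reduces to $c_\wedge^{D^\ast}d_1^{C^\ast}=0$ once $r_C=0$; together these give $N_{11}d_1^{D^\ast}-N_{12}d_1^{C^\ast}=0$. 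For the second coordinate one argues symmetrically, combining $(xiii)$ with $(x)-(xii)$ and using $(xv)$. Hence $v\in\ker N$ with $v\neq 0$, so $\det N=0$, contradicting $(xxv)$; therefore $\det M_C=0$, and the $C\leftrightarrow D$ symmetry yields $\det M_D=0$.

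The difficulty here is purely organizational: among the twenty-four representation equations one must isolate exactly those few that, after imposing $r_C=0$, collapse to the two scalar proportionalities expressing that both rows of $N$ are orthogonal to the common vector $(d_1^{D^\ast},-d_1^{C^\ast})$. Establishing these two proportionalities first, and only then reading off $Nv=0$, is far cleaner than expanding $\det N$ symbolically and simplifying it to zero. Once $\det M_C=0$ is secured, invoking the symmetry spares repeating the computation for $M_D$.
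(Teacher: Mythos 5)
Your proposal is correct, and I checked the key identities: with $r_C=0$ forced by $(ii),(iv)$ and the invertibility of $M_C$, the combination of $(xxi)$ with the difference of $(xix)$ and $(xx)$ indeed gives $N_{11}d_1^{D^\ast}=(c_{C^\ast}^w-c_{C^\ast}^z)d_1^{C^\ast}$, while $(xxiv)$ gives $N_{12}d_1^{C^\ast}=(c_{C^\ast}^w-c_{C^\ast}^z)d_1^{C^\ast}$, and the symmetric computation with $(xiii)$, $(x)-(xii)$ and $(xv)$ handles the second row; since $r_D=M_C^{-1}(1,1)^T\neq 0$, the vector $(d_1^{D^\ast},-d_1^{C^\ast})^T$ is a nonzero element of $\ker N$, and with $\det M_1=0$ both alternatives of $(xxv)$ fail. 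This is a genuinely different organization from the paper's proof. The paper also argues by contradiction from $\det(M_C)\neq 0$ and also locates the obstruction in the definiteness inequality $(xxv)$, but it proceeds by an exhaustive case analysis on whether $d_1^{C^\ast}$ and $d_1^{D^\ast}$ vanish (cases 1, 1a, 1b, 2, 3), in each branch solving for essentially all sixteen constants and observing that every equation can be satisfied while $(xxv)$ fails. Your single kernel-vector argument collapses all of these branches at once and is shorter; what the paper's longer route buys is the explicit observation that the full system of 24 equations remains consistent in each branch, i.e., that genuine non-definite planed LRT Lie supergroup structures exist there, a fact the paper wants to record but which is not needed for the lemma itself. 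Your appeal to the $A\leftrightarrow B$, $C\leftrightarrow D$ symmetry for $\det(M_D)=0$ matches the paper's ``follows analogously.''
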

\begin{proof} A contradiction to $det(M_C)\neq 0$ is constructed.  ($det(M_D)=0$ follows analogously.)
For this note that $(ii)$ and $(iv)$ yield $c_1^{C^\ast}=c_1^{D^\ast}=0$ satisfying $(v)$ to $(viii)$. 
Furthermore, $d_1^{C^\ast}=d_1^{D^\ast}=0$ contradicts $(xix)$ and $(xx)$.

\noindent
\textit{Case 1:} Assume $det(M_{C})\neq 0$ and $d_1^{C^\ast},d_1^{D^\ast}\neq 0$ and set 
$\alpha \in \mathbb K^\times$ with $d_1^{C^\ast}=\alpha d_1^{D^\ast}$.
$(xix)$ and $(xx)$ yield $\alpha c_{C^\ast}^z+c_{D^\ast}^z=\alpha c_{C^\ast}^w+c_{D^\ast}^w=(d_1^{D^\ast})^{-1}$. 
The condition $(xxii)$ implies $c_\wedge^{C^\ast}=2(c_{D^\ast}^z-c_{D^\ast}^w)$ and $(xxiii)$ leads to $c_\wedge^{D^\ast}=0$. Now none of the inequalities $(xxv)$ can be satisfied. 

\noindent
\textit{Case 2:} Assume $det(M_{C})\neq 0$ and $d_1^{C^\ast}\neq 0$, $d_1^{D^\ast}= 0$. 
It is obtained from  $(xxiv)$ that  $c_\wedge^{D^\ast}=0$. $(xix)$ and $(xx)$ yield 
$c_{C^\ast}^z=c_{C^\ast}^w$. Now none of the inequalities $(xxv)$ can be satisfied. 

\noindent
\textit{Case 3:} Assume $det(M_{C})\neq 0$ and $d_1^{C^\ast}=0$, $d_1^{D^\ast}\neq 0$. 
It is obtained from  $(xxi)$ that $c_\wedge^{C^\ast}=0$. $(xix)$ and $(xx)$ yield 
$c_{D^\ast}^z=c_{D^\ast}^w$. Now none of the inequalities $(xxv)$ can be satisfied. 
\end{proof}
\noindent
The following lemmas express $c_\wedge^{C^\ast}$,$c_\wedge^{D^\ast}$,$d_\wedge^{C^\ast}$ and $d_\wedge^{D^\ast}$ in terms of the remaining parameters.

\begin{lem}\label{lem:h1}
$c_\wedge^{C^\ast}=d_\wedge^{D^\ast}=0$. 
\end{lem}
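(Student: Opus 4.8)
The plan is to prove $c_\wedge^{C^\ast}=0$ first and then deduce $d_\wedge^{D^\ast}=0$ essentially for free. The Lie superalgebra $\mathfrak{gl}_\mathbb K(1/1)$ admits the automorphism $\sigma$ interchanging $A\leftrightarrow B$ and $C\leftrightarrow D$; on $G=(\mathbb K^\times)^2$ it is covered by the coordinate swap $(z,w)\mapsto(w,z)$, and one checks from (\ref{eq:303}) that $\sigma$ is an automorphism of the Harish-Chandra superpair. The induced relabelling of the sixteen structural constants carries the list of conditions $(i)$--$(xxiv)$ into itself and preserves the inequality $(xxv)$, so it maps definite planed LRT structures to definite ones while interchanging, up to sign, $c_\wedge^{C^\ast}$ and $d_\wedge^{D^\ast}$. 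Hence it is enough to establish $c_\wedge^{C^\ast}=0$ for every admissible set of constants; applying $\sigma$ then gives $d_\wedge^{D^\ast}=0$ (alternatively, the second identity follows by the verbatim analogous argument).

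For the main claim I would argue by contradiction, assuming $c_\wedge^{C^\ast}\neq 0$. Conditions $(v)$ and $(vi)$ read $c_\wedge^{C^\ast}c_1^{C^\ast}=c_\wedge^{C^\ast}c_1^{D^\ast}=0$, so $c_1^{C^\ast}=c_1^{D^\ast}=0$: the top row of $M_1$ vanishes and $\det(M_1)=0$. The definiteness condition $(xxv)$ therefore forces the \emph{first} determinant to be nonzero, and this is precisely the determinant whose vanishing I intend to derive. With $c_1^{C^\ast}=c_1^{D^\ast}=0$ most of the remaining conditions collapse, and the strategy is to show that the two columns of that $2\times 2$ matrix become proportional.

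The key steps, in order, are as follows. First $(xix)$ gives $(d_1^{C^\ast},d_1^{D^\ast})\neq 0$, and $(xxi)$ excludes $d_1^{C^\ast}=0$, so $d_1^{C^\ast}\neq 0$; then $(xxii)$ yields $c_\wedge^{C^\ast}=2(c_{D^\ast}^z-c_{D^\ast}^w)$, while $(xiv)$ yields $d_\wedge^{C^\ast}=2(d_{D^\ast}^z-d_{D^\ast}^w)$ and $(xv)$ gives $d_\wedge^{D^\ast}=0$. Substituting the expression for $c_\wedge^{C^\ast}$, conditions $(i)$ and $(iii)$ reduce to $c_{D^\ast}^z c_\wedge^{D^\ast}=c_{D^\ast}^w c_\wedge^{D^\ast}=0$, whence $c_\wedge^{D^\ast}=0$ (the constants $c_{D^\ast}^z,c_{D^\ast}^w$ cannot both vanish, or else $c_\wedge^{C^\ast}=0$). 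Inserting these identities, the left column of the definiteness matrix becomes $(\tfrac12 c_\wedge^{C^\ast},\,\tfrac12 d_\wedge^{C^\ast})^{T}$; writing $d_{C^\ast}^z=-\lambda\,d_{D^\ast}^z$, $d_{C^\ast}^w=-\lambda\,d_{D^\ast}^w$ with $\lambda=d_1^{D^\ast}/d_1^{C^\ast}$ via $(x)$ and $(xii)$, and using the difference $(xix)-(xx)$, the right column turns out to be exactly $\lambda$ times the left one. Thus the first determinant vanishes, contradicting $(xxv)$ and proving $c_\wedge^{C^\ast}=0$.

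The main obstacle is purely organizational: keeping track of which of the sixteen constants have already been forced to vanish and assembling the conditions in the right order so that the definiteness matrix visibly acquires proportional columns with ratio $\lambda$. Lemma \ref{lem:h15} is not strictly needed here, since the singularity of $M_C$ and $M_D$ re-emerges automatically from $(x)$ and $(xii)$, though it may be invoked to shorten the bookkeeping. I expect the delicate point to be the passage through $(i)$ and $(iii)$ to $c_\wedge^{D^\ast}=0$, where one must exclude the degenerate branch $c_{D^\ast}^z=c_{D^\ast}^w=0$ by appealing to the standing assumption $c_\wedge^{C^\ast}\neq 0$.
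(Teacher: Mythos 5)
Your proof is correct, and while it shares the paper's skeleton -- assume $c_\wedge^{C^\ast}\neq 0$, deduce $c_1^{C^\ast}=c_1^{D^\ast}=0$ from $(v)$,$(vi)$ and $(d_1^{C^\ast},d_1^{D^\ast})\neq(0,0)$ from $(xix)$,$(xx)$, then contradict the first inequality in $(xxv)$ -- it streamlines the middle and the end. The paper splits into three cases according to whether $c_\wedge^{C^\ast}=2(c_{D^\ast}^z-c_{D^\ast}^w)$ and whether $d_1^{C^\ast}=0$; you avoid this entirely by observing that $(xxi)$ with $c_1^{D^\ast}=0$ immediately rules out $d_1^{C^\ast}=0$ (it would force $d_1^{D^\ast}=0$ as well), after which $(xxii)$ makes the identity $c_\wedge^{C^\ast}=2(c_{D^\ast}^z-c_{D^\ast}^w)$ automatic, so only one branch survives. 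For the final contradiction the paper substitutes into $(xvii)$,$(xviii)$ and subtracts, whereas you use $(x)$,$(xii)$ to express the $d_{C^\ast}$-entries as $-\lambda$ times the $d_{D^\ast}$-entries and the difference $(xix)-(xx)$ to see that the two columns of the definiteness matrix are proportional with ratio $\lambda=d_1^{D^\ast}/d_1^{C^\ast}$; both computations land on the vanishing of the same determinant. Two minor remarks: your derivation of $c_\wedge^{D^\ast}=0$ via $(i)$,$(iii)$ needs the extra observation excluding $c_{D^\ast}^z=c_{D^\ast}^w=0$, which you supply, but the paper gets it more cheaply from $(xxiii)$,$(xxiv)$ using only $(d_1^{C^\ast},d_1^{D^\ast})\neq(0,0)$; and your symmetry $\sigma$ (swapping $A\leftrightarrow B$, $C\leftrightarrow D$, covered by $(z,w)\mapsto(w,z)$) is a legitimate way to transfer the result to $d_\wedge^{D^\ast}$, playing the role of the paper's ``follows analogously,'' though one should note it is the isomorphism $\mathbf{\Psi}^-_{1,1,0,1}$ in the paper's notation so that its action on the constants is the one tabulated there.
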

\begin{proof} Assuming $c_\wedge^{C^\ast}\neq0$ a contradiction is derived. ($d_\wedge^{D^\ast}=0$ follows analogously.)
From $c_\wedge^{C^\ast}\neq 0$ it follows from equations $(v)$ and $(vi)$ that $c_1^{C^\ast}=c_1^{D^\ast}=0$. Further, 
$(xix)$ and $(xx)$ imply that $(d_1^{C^\ast},d_1^{D^\ast}) \neq (0,0)$.  Then from $(xxiii)$ and $(xxiv)$ 
if follows that $c_\wedge^{D^\ast}=0$.

\noindent
\textit{Case 1:} Assume that $d_1^{C^\ast}=0$ hence $d_1^{D^\ast}\neq 0$. This contradicts $(xxi)$.

\noindent
\textit{Case 2:} Assume that $d_1^{C^\ast}\neq 0$.  $(xxi)$ and $(xxii)$ yield $c_\wedge^{C^\ast}d_1^{D^\ast}=2(c_{C^\ast}^w-c_{C^\ast}^z)d_1^{C^\ast} $ and $c_\wedge^{C^\ast}=2(c_{D^\ast}^z-c_{D^\ast}^w)$ contradicting $(xxv)$.
\end{proof}
\begin{lem}\label{lem:h2}
$c_\wedge^{D^\ast}=2(c_{C^\ast}^z-c_{C^\ast}^w)$ and $d_\wedge^{C^\ast}=2(d_{D^\ast}^z-d_{D^\ast}^w)$.
\end{lem}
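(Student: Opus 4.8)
The plan is to prove both identities by the same method, which is symmetric under the exchange of the data attached to $C$ and $D$ (equivalently $A\leftrightarrow B$ and $z\leftrightarrow w$): this interchanges the relations coming from $[C,C]=0$ with those coming from $[D,D]=0$, maps the mixed relations coming from $[C,D]=A+B$ among themselves, and preserves the definiteness condition $(xxv)$. Throughout I would feed in the two facts already obtained, $\det(M_C)=\det(M_D)=0$ from Lemma~\ref{lem:h15} and $c_\wedge^{C^\ast}=d_\wedge^{D^\ast}=0$ from Lemma~\ref{lem:h1}, which delete the corresponding monomials from the twenty-four equations.

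The trigger is that equation $(vii)$ factors as $c_1^{D^\ast}\big(2(c_{C^\ast}^z-c_{C^\ast}^w)-c_\wedge^{D^\ast}\big)=0$ and, dually, equation $(xiv)$ factors as $d_1^{C^\ast}\big(d_\wedge^{C^\ast}-2(d_{D^\ast}^z-d_{D^\ast}^w)\big)=0$. Hence the first identity is immediate on the open set $c_1^{D^\ast}\neq 0$ and the second on $d_1^{C^\ast}\neq 0$, so the work is confined to the degenerate loci $c_1^{D^\ast}=0$ and $d_1^{C^\ast}=0$. On the mixed locus $c_1^{D^\ast}=0,\ d_1^{C^\ast}\neq 0$ I would read off from $(xxi)$ that $c_{C^\ast}^z=c_{C^\ast}^w$ and from $(xxiv)$ that $c_\wedge^{D^\ast}=0$, so that both sides of the first identity vanish; reading the very same equations $(xxi)$ and $(xxiv)$ on the mirror locus $d_1^{C^\ast}=0,\ c_1^{D^\ast}\neq 0$ gives $d_\wedge^{C^\ast}=0$ and $d_{D^\ast}^z=d_{D^\ast}^w$, which is the second identity.

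The remaining and genuinely delicate case is the doubly degenerate locus $c_1^{D^\ast}=d_1^{C^\ast}=0$, where $(xxi)$ and $(xxiv)$ carry no information and the second determinant in $(xxv)$ collapses to $c_1^{C^\ast}d_1^{D^\ast}$. I would organize the analysis by this product. If $c_1^{C^\ast}d_1^{D^\ast}\neq 0$, then $(ii)$ and $(iv)$ force $c_{C^\ast}^z=c_{C^\ast}^w=0$ while $(viii)$ forces $c_\wedge^{D^\ast}=0$, and the dual relations $(x),(xii),(xiii)$ force $d_{D^\ast}^z=d_{D^\ast}^w=0$ and $d_\wedge^{C^\ast}=0$; both identities then hold with all four quantities equal to zero. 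The main obstacle is to exclude the configuration $c_1^{C^\ast}d_1^{D^\ast}=0$: I would show that the vanishing of $c_1^{C^\ast}$ (resp.\ $d_1^{D^\ast}$), together with $(xix)$ and $(xx)$, forces $c_{D^\ast}^z=c_{D^\ast}^w$ (resp.\ $d_{C^\ast}^z=d_{C^\ast}^w$), whereupon $\det(M_C)=0$ (resp.\ $\det(M_D)=0$) combined with $(i)$ (resp.\ $(ix)$) makes a whole row of the first matrix in $(xxv)$ vanish, so that its determinant is zero as well; since the second determinant is already zero, this contradicts definiteness and rules the configuration out. The crux is therefore not a single hard idea but the careful choice, in this doubly degenerate corner, of exactly which structural relations force both determinants of $(xxv)$ to vanish simultaneously.
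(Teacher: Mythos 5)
Your proposal is correct: I checked each equation you invoke against the list $(i)$--$(xxv)$ (with $c_\wedge^{C^\ast}=d_\wedge^{D^\ast}=0$ and $\det M_C=\det M_D=0$ fed in), and every step goes through, including the delicate corner $c_1^{D^\ast}=d_1^{C^\ast}=0$ with $c_1^{C^\ast}d_1^{D^\ast}=0$, where $(xix)$, $(xx)$ do force $c_{D^\ast}^z=c_{D^\ast}^w=(d_1^{D^\ast})^{-1}\neq 0$, then $\det M_C=0$ gives $c_{C^\ast}^z=c_{C^\ast}^w$ and $(i)$ gives $c_\wedge^{D^\ast}=0$, annihilating the first row of the first matrix in $(xxv)$ while the second determinant is already $c_1^{C^\ast}d_1^{D^\ast}-c_1^{D^\ast}d_1^{C^\ast}=0$. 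The paper shares your starting point --- the factorization of $(vii)$ forcing $c_1^{D^\ast}=0$ in the problematic situation, and a terminal appeal to definiteness --- but is organized as a single reductio: it assumes $c_\wedge^{D^\ast}\neq 2(c_{C^\ast}^z-c_{C^\ast}^w)$, successively deduces $c_1^{D^\ast}=0$, then $c_1^{C^\ast}=0$ via $(ii)$, $(iv)$, $(viii)$, then $c_\wedge^{D^\ast}=0$ via $(xxiii)$, $(xxiv)$, then $d_1^{C^\ast}=0$ via $(xxi)$, and finally extracts the contradiction from $(xxv)$ combined with $(xvii)$ and $(xviii)$ rather than from $(i)$/$(ix)$ and the vanishing determinants of $M_C$, $M_D$ as you do; the second identity is dispatched as ``analogous.'' Your version buys a cleaner logical structure --- both identities are proved simultaneously, the $C\leftrightarrow D$ symmetry is made explicit, and only one corner case requires a genuine contradiction --- at the cost of a four-way case split; the paper's chain is shorter on the page but leaves more of the bookkeeping (in particular the dual statement) to the reader.
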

\begin{proof} Assuming $c_\wedge^{{D^\ast}}\neq 2(c_{C^\ast}^z-c_{C^\ast}^w)$ a contradiction is derived. 
($d_\wedge^{C^\ast}=2(d_{D^\ast}^z-d_{D^\ast}^w)$ follows analogously.)
From $(vii)$ it follows that $c_1^{D^\ast}=0$ and from $(ii)$, $(iv)$ and $(viii)$ that 
$c_{C^\ast}^zc_1^{C^\ast}=c_{C^\ast}^wc_1^{C^\ast}=c_\wedge^{D^\ast}c_1^{C^\ast}=0$. Thus $c_1^{C^\ast}\neq 0$ 
contradicts the assumption. Hence $c_1^{C^\ast}=0$. By $(xix)$ and $(xx)$ it is $(d_1^{C^\ast},d_1^{D^\ast})\neq (0,0)$. 
Hence, by $(xxiii)$ and $(xxiv)$ it follows that $c_\wedge^{D^\ast}=0$. Furthermore with Lemma \ref{lem:h1} $(xxi)$ yields $d_1^{C^\ast}=0$, 
since otherwise $c_{C^\ast}^z=c_{C^\ast}^w$ contradicting the assumption. Now $(xix)$ and $(xx)$ yield  $c_{D^\ast}^z=c_{D^\ast}^w$ contradicting $(xxv)$ with Lemma \ref{lem:h1}.
\end{proof}
\noindent
 This yields with a (non-degenerate) coordinate change:

\begin{prop}\label{prop:gl}
The RTLSG-structures for $\mathfrak{gl}(1,1)$ are parametrized by eight parameters $c_1^{C^\ast}$, $c_1^{D^\ast}$, $d_1^{C^\ast}$, $d_1^{D^\ast}$, $c^z$, $c^w$, $d^z$ and $d^w$ with 
$$
M_{C}=\begin{pmatrix}-c_1^{D^\ast}c^z&c_1^{C^\ast}c^z\\-c_1^{D^\ast}c^w&c^{C^\ast}_1c^w \end{pmatrix} \quad \mbox{and} \quad M_{D}=\begin{pmatrix}-d_1^{D^\ast}d^z&d_1^{C^\ast}d^z\\-d_1^{D^\ast}d^w&d^{C^\ast}_1d^w \end{pmatrix} 
$$ 
satisfying
\begin{align}\label{cond}
det\begin{pmatrix} c_1^{C^\ast}& c_1^{D^\ast}\\d_1^{C^\ast}& d_1^{D^\ast}\end{pmatrix}(d^z-c^z)=det\begin{pmatrix} c_1^{C^\ast}& c_1^{D^\ast}\\d_1^{C^\ast}& d_1^{D^\ast}\end{pmatrix}(d^w-c^w)=1
\end{align} 
\end{prop}
\begin{proof} 
Since $det(M_C)=det(M_D)=0$, by Lemma \ref{lem:h15} new parameters $\mu_C$,$\mu_D$, $\nu_C$, $\nu_D$, $c^z$, $c^w$, $d^z$ and $d^w$ can be introduced with
\begin{align*}
 &M_C=\left(\begin{smallmatrix} \mu_C c^z & \nu_C c^z \\ \mu_C c^w & \nu_C c^w \end{smallmatrix} \right)
 \quad  M_D=\left(\begin{smallmatrix} \mu_D d^z & \nu_D d^z \\ \mu_D d^w & \nu_D d^w \end{smallmatrix} \right)
\end{align*}
Equations $(ii)$, $(iv)$, $(x)$ and $(xii)$ yield with new parameters $c_1$ and $d_1$
$$  
c_1^{C^\ast}= \nu_C c_1 \qquad
c_1^{D^\ast}=-\mu_C c_1\qquad
d_1^{C^\ast}=\nu_D d_1\qquad
d_1^{D^\ast}= -\mu_D d_1 
$$
while $(xix)$ and $(xx)$ become:
\begin{align*}
det\left(\begin{smallmatrix} \nu_C & \mu_C \\ \nu_D & \mu_D \end{smallmatrix}\right) (d^zc_1-c^zd_1)=det\left(\begin{smallmatrix} \nu_C & \mu_C \\ \nu_D & \mu_D \end{smallmatrix}\right) (d^wc_1-c^wd_1)=1
\end{align*}
Putting the information of Lemmas \ref{lem:h1} and \ref{lem:h2} into the first inequality of $(xxv)$ contradicts the difference of $(xix)$ and $(xx)$. Hence the second inequality of $(xxv)$ must be satisfied. It yields $c_1\neq 0$, $d_1\neq 0$. Under these conditions the remaining equations are satisfied. The two degrees of freedom gained by the reparametrization of $M_C$ and $M_D$ can be used to normalize $c_1=d_1=1$. A direct calculation of the Jacobian of the coordinate change shows that it is non-degenerate.
\end{proof}

\noindent
 The appearing six dimensional variety in the Proposition will be called $\mathcal V_{\mathfrak{gl}}$. Set
$$\mathcal V_{\mathfrak{sl}}:=\{ \mbox{Parameter sets in } \mathcal V_{\mathfrak{gl}} \mbox{ with } c^z=c^w \mbox{ and } d^z=d^w\}$$
being a 5-dimensional subvariety. 
These are the parameter spaces in Theorem \ref{thm:res}.

\bigskip \noindent
Note that for fixed $w \in \mathbb K^\times$ the irreducible affine variety $ c_1^{C^\ast}d_1^{D^\ast}-c_1^{D^\ast}d_1^{C^\ast}=w $ in $\mathbb K^4$ has one connected component. Since the remaining equations $(d^z-c^z)=(d^w-c^w)=w^{-1}$ in (\ref{cond}) describe affine planes in $\mathbb K^4$ it follows that $\mathcal V_{\mathfrak gl}$ and $\mathcal V_{\mathfrak sl}$ are connected in the complex case. In the real case they decompose in two connected components with respect to the sign of $w$.

\bigskip \noindent
\textbf{Example 1 (continuation).} For the Lie supergroup associated to a $\mathfrak {gl}(1,1)$-Harish-Chandra superpair  the associated RTLSGs given by the construction of Kostant and Koszul (see \cite{Kost} and \cite{Kosz}) and Berezin (see \cite{Ber}) yield the parameter sets 
\begin{align}\label{KK}
\begin{pmatrix} c_1^{C^\ast}& c_1^{D^\ast}\\d_1^{C^\ast}& d_1^{D^\ast}\end{pmatrix}=\begin{pmatrix} -1&0\\0&-1\end{pmatrix} \qquad \mbox{ and } \qquad -c^z=-c^w=d^z=d^w=\frac{1}{2} 
\end{align}
respectively
\begin{align}\label{Ber}
\begin{pmatrix} c_1^{C^\ast}& c_1^{D^\ast}\\d_1^{C^\ast}& d_1^{D^\ast}\end{pmatrix}=\begin{pmatrix} 1&0\\0&1\end{pmatrix} \qquad \mbox{ and } \qquad -c^z=d^w=1, \quad c^w=d^z=0
\end{align}
A detailed calculation can be found in \cite{Dis}. Varying the identification of $\mathcal F$ and $\mathcal C^\infty_G\otimes \Lambda \mathfrak g_1^\ast$, resp. $\mathcal O_G\otimes \Lambda \mathfrak g_1^\ast$ but keeping the representation of $\mathfrak g_0$ fixed allows a super coordinate change of the form 
$$f_{1,0}\mapsto f_{1,0}(1+t_z C^\ast\wedge D^\ast),\quad f_{0,1}\mapsto f_{0,1}(1+t_w C^\ast\wedge D^\ast),\quad C^\ast \mapsto t_{C^\ast} C^\ast, \quad D^\ast \mapsto t_{D^\ast} D^\ast$$
with $t_z,t_w \in \mathbb K$ and $t_{C^\ast}, t_{D^\ast}\in \mathbb K^\times$. 
Starting with (\ref{Ber}), parameters are changed to
\begin{align*}
\begin{pmatrix} c_1^{C^\ast}& c_1^{D^\ast}\\d_1^{C^\ast}& d_1^{D^\ast}\end{pmatrix}=\begin{pmatrix} t_{C^\ast}&0\\0&t_{D^\ast}\end{pmatrix} \qquad \mbox{ and } \qquad \begin{array}{ll} c^z=-t_{C^\ast}^{-1}t_{D^\ast}^{-1}+t_z, \quad & c^w=t_w\\   d^z=t_z \quad & d^w=t_{C^\ast}^{-1}t_{D^\ast}^{-1}+t_w \end{array}
\end{align*}
This characterizes all those RTLSG structures for $\mathfrak{gl}(1,1)$ that can be obtained from $G$-equivariant Batchelor models. Setting $t_{C^\ast}=t_{D^\ast}=-1$ and $t_z=-t_w=\frac{1}{2}$ leads to (\ref{KK}). Obviously not all RTLSGs of $\mathfrak{gl}(1,1)$ can be reached this way. 

%--------------------------------------------------------------------------------------
\subsection{Isomorphism classes}\label{seciso}
\noindent
It is studied how transporting the representation $\rho$ with isomorphisms of a Harish-Chandra superpair affects the parameters. First the automorphism of the Lie superalgebra $\mathfrak{gl}(1,1)$ have to be determined.
\begin{lem}
The elements of $Aut(\mathfrak{gl}(1,1))$ are maps $\psi_{x,y,u,v}^\pm$ for $u,v \in \mathbb K$, $x,y \in \mathbb K^\times$ with $xy=u+v$ where
 \begin{align*}
&\begin{array}{ll} \psi_{x,y,u,v}^+(A)=uA+(u-1)B \qquad \qquad &\psi_{x,y,u,v}^+(C)=xC  \\ \psi_{x,y,u,v}^+(B)=vA+(v+1)B& \psi_{x,y,u,v}^+(D)= yD \ . \end{array}
\end{align*}
and $\psi_{x,y,u,v}^-=\gamma\circ \psi_{x,y,u,v}^+$ where $\gamma$ is the map interchanging $A \leftrightarrow B$ and $C \leftrightarrow D$.
\end{lem}
\begin{proof}  From $[C,C]=[D,D]=0$ it follows that  $\psi \in Aut(\mathfrak g)$ can either scale $C$ and $D$ by factors $x$, resp $y \in \mathbb K^\times$ or scale and interchange them. By $[C,D]=A+B$ a scaling of $A+B$ with $xy$ is induced. The brackets of $A$ and $B$ with $C$ and $D$ yield $\psi|_{\mathfrak g_{\bar 0}}$.
\end{proof}
\noindent
Setting $b:=u-v-1$ the following result is obtained as a byproduct:
\begin{prop}
The group of automorphisms of $\mathfrak{gl}(1,1)$ is $\mathfrak S_2 \ltimes\left((\mathbb K^\times)^2 \ltimes \mathbb K\right)$ with 
 $$(\sigma,x,y,b) \circ (\sigma^\prime,x^\prime,y^\prime,b^\prime)=(\sigma\sigma^\prime,\sigma^\prime_1(x,y)x^\prime,\sigma^\prime_2(x,y)y^\prime,\mbox{sgn}(\sigma^\prime)x^\prime y^\prime b +b^\prime)$$
\end{prop}
\begin{proof}
 An automorphism $\psi^+_{x,y,u,v}$ is given by the parameters $x$, $y$ and $b$. By direct calculation $(x,y,b)=(x,y,0)\circ (1,1,b)=(1,1,(xy)^{-1}b) \circ (x,y,0)$ while $(x,y,0)$, resp. $(1,1,b)$ commute with morphisms of the same type respectively. Concerning $\gamma$ observe $(x,y,0)\circ \gamma=\gamma\circ (y,x,0)$ and $(1,1,b)\circ \gamma=\gamma\circ(1,1,-b)$.
\end{proof}
\noindent 
An automorphism $\psi_{x,y,u,v}^+$ applied to a parameter set of RTLSG structures for $\mathfrak{gl}(1,1)$ yields:
\begin{align}\label{pauto}
\begin{array}{rlcl}
 \left(\begin{array}{cc} c^z &d^z \\c^w & d^w \end{array}\right)  \ \ &\longmapsto&\ A_{xy,b} \cdot\left(\begin{array}{cc} c^z &d^z \\c^w & d^w  \end{array}\right)\\ \quad \mbox{ and }  \quad \left(\begin{array}{cc}  c_1^{C^\ast}& c_1^{D^\ast}\\d_1^{C^\ast}& d_1^{D^\ast} \end{array}\right) \ &\longmapsto&\ \left(\begin{array}{cc}  c_1^{C^\ast}& \frac{x}{y}c_1^{D^\ast}\\\frac{y}{x}d_1^{C^\ast}& d_1^{D^\ast} \end{array}\right)
  \end{array}
\end{align}
\noindent
where $A_{u,v}=\left(\begin{smallmatrix} 1+v&-v\\1-u&u  \end{smallmatrix}\right)$. The automorphism $\gamma$ rotates both matrices by $180^\circ$ multiplying the first with $-1$.
It follows: 
\begin{lem}\label{342} Set $\alpha:=c^z-c^w$ (which equals $d^z-d^w$ by (\ref{cond})) and regard the $\mathbb K^\times$-action by 
\begin{align*}
 t.(c_1^{D^\ast},d_1^{C^\ast})=(tc_1^{D^\ast},\ t^{-1}d_1^{C^\ast}).
\end{align*}
In the case $\alpha=0$ the isomorphism classes of $\mathfrak{gl}(1,1)$-RTLSG correspond up to the $\mathfrak S_2$-action  to the $\mathbb K^\times$-orbits in the five dimensional subvariety  $\mathcal V_{\mathfrak {sl}}\subset\mathcal V_{\mathfrak {gl}}$.\\
In the case $\alpha\neq 0$ the isomorphism classes of $\mathfrak{gl}(1,1)$-RTLSG correspond up to the $\mathfrak S_2$-action  to the $\mathbb K^\times$-orbits in the four dimensional subvariety of $\mathcal V_{\mathfrak {gl}}$ given by $c^z=d^w=c^w+d^z=0$.
\end{lem}
\begin{proof}
In the first case the matrix $A_{u,v}$ acts trivially. In the second case the arising condition on $u$ and $v$ for moving $c^z$ and $d^w$ to zero can be solved uniquely by $v=-c^z\alpha^{-1}$ and $u=d^z\alpha^{-1}$. 
Note that $u+v\neq0$ follows from $d^z-c^z\neq 0$ in (\ref{cond}).
\end{proof}
\noindent
Now the first part of Theorem \ref{thm:res} can be proved.

\bigskip\noindent 
\textit{Proof of Theorem \ref{thm:res} (a):} The condition (\ref{cond}) defines  non-singular subvarieties $\mathcal V_{\mathfrak {gl}}$ and $\mathcal V_{\mathfrak {sl}}$ by direct calculation of the rank of the Jacobian. The functions in (\ref{inv1}) are invariant under automorphisms in (\ref{pauto}). The $\mathfrak S_2$-action by $\gamma$ interchanges the components of $I_2$. It follows from Lemma \ref{342} ($\alpha \neq 0$) and (\ref{cond}) that the values of the invariants distinguish the orbits. \hfill $\Box$

%--------------------------------------------------------------------------
% SECTION IV
%--------------------------------------------------------------------------
\section{Parametrization of the $\mathfrak{sl} (1,1)$-structures}\label{secg}
\noindent 
An $\mathfrak{sl}(1,1)$-RTLSG structure and its automorphisms can be continued to the $\mathfrak{gl}(1,1)$-case. This is done here explicitly. The  Lie subsuperalgebra $\mathfrak{sl}(1,1)\subset \mathfrak{gl}(1,1)$ is generated by $H^+:=A+B$, $C$ and $D$.
Note  that due to $[H^+,C]=[H^+,D]=0$  it is $H^+.C^\ast=H^+.D^\ast=0$. Note further that  $f_{1,0}$ and $f_{0,1}$ satisfy $H^+.f_{1,0}=H^-.f_{1,0}=f_{1,0}$ for $H^-:=A-B$ and $H^+.f_{0,1}=-H^-.f_{0,1}=f_{0,1}$ with $f_{1,0}(e)=f_{0,1}(e)=1$. On the underlying simply connected Lie group $G_{\mathfrak {sl}}$ of the $\mathfrak{sl}(1,1)$-RTLSG it is $f:=f_{1,0}|_{G_{\mathfrak {sl}}}=f_{0,1}|_{G_{\mathfrak {sl}}}$.

\bigskip\noindent 
Starting with an $\mathfrak{sl}(1,1)$-RTLSG the relations $[H^+,C]=[H^+,D]=0$ applied to $f$  yield that $C.f$ and $D.f$ are linear combinations of $fC^\ast$ and $fD^\ast$:
$$ C.f=c_{C^\ast}fC^\ast+c_{D^\ast}fD^\ast \qquad \mbox{and} \qquad D.f=d_{C^\ast}fC^\ast+d_{D^\ast}fD^\ast $$
The same is obtained by restricting (\ref{const1}) to $G_{\mathfrak {sl}}$ and fixing 
$$c_{C^\ast}:=c_{C^\ast}^z=c_{C^\ast}^w,\quad  c_{D^\ast}:=c_{D^\ast}^z=c_{D^\ast}^w,\quad d_{C^\ast}:=d_{C^\ast}^z=d_{C^\ast}^w \mbox{ and } d_{D^\ast}:=d_{D^\ast}^z=d_{D^\ast}^w. $$
Furthermore application of $[H^+,C]=[H^+,D]=0$ to $C^\ast$ and $D^\ast$ yields that $C.C^\ast$, $C.D^\ast$, $D.C^\ast$ and $D.D^\ast$ are linear combinations of $1$ and $C^\ast\wedge D^\ast$ coinciding with (\ref{const2}) restricted to $G_\mathfrak {sl}$. So altogether this yields parameters for a $\mathfrak{gl}(1,1)$-RTLSG from an $\mathfrak{sl}(1,1)$-RTLSG.
\begin{lem}
 The parameters defined above yield an embedding of an $\mathfrak{sl}(1,1)$-RTLSG into a $\mathfrak{gl}(1,1)$-RTLSG. 
\end{lem}
\begin{proof}
 It is necessary to check if the conditions in  section \ref{secu} are satisfied on the $\mathfrak{gl}(1,1)$-RTLSG parameter set constructed from an $\mathfrak{sl}(1,1)$-RTLSG. Since equations $(i)$ to $(xxiv)$ come from brackets that are also true in $\mathfrak{sl}(1,1)$ these equations hold. For an $\mathfrak{sl}(1,1)$-RTLSG  by a calculation analogous to the $\mathfrak{gl}(1,1)$-RTLSG case in section \ref{secu} it follows instead of $(xxv)$ that:
\begin{align*}
   d_\wedge^{D^\ast}\cdot c_1^{C^\ast} +c_\wedge^{C^\ast} \cdot d_1^{D^\ast}   -d_\wedge^{C^\ast}\cdot c_1^{D^\ast}-c_\wedge^{D^\ast}\cdot d_1^{C^\ast}\neq 0  \qquad  \mbox{ or } \qquad det\begin{pmatrix}c_1^{C^\ast}&c_1^{D^\ast}\\ 
d_1^{C^\ast}&d_1^{D^\ast}\end{pmatrix}\neq 0
\end{align*}
satisfying $(xxv)$. So the induced parameters satisfy all 25 equations.
\end{proof}
\noindent
 Hence it is possible to canonically continue an $\mathfrak{sl}(1,1)$-RTLSG to a $\mathfrak{gl}(1,1)$-RTLSG in $\mathcal V_{\mathfrak{sl}}$. So setting $c:=c^z=c^w$ and $d:=d^z=d^w$ is follows from Proposition \ref{prop:gl}:
\begin{prop}\label{prop:sl}
The RTLSG-structures for $\mathfrak{sl}(1,1)$ are parametrized by six parameters $c_1^{C^\ast}$, $c_1^{D^\ast}$, $d_1^{C^\ast}$, $d_1^{D^\ast}$, $c$ and $d$ 
satisfying
\begin{align}\label{q1}
det\begin{pmatrix} c_1^{C^\ast}& c_1^{D^\ast}\\d_1^{C^\ast}& d_1^{D^\ast}\end{pmatrix}(d-c)=1
\end{align}
\end{prop}
\noindent
Now the automorphisms of $\mathfrak{sl}(1,1)$ are compared with those of $\mathfrak{gl}(1,1)$.
An automorphism of $\mathfrak{sl}(1,1)$ is induced by an element  $A \in GL(\mathfrak g_{\bar 1})$. The bracket $[C,C]=[D,D]=0$ yields $A(C)=xC$ and $A(D)=yD$ or $A(C)=yD$ and $A(D)=xC$ with $x,y \in \mathbb K^\ast$. Hence the automorphism can be continued to $\mathfrak{gl}(1,1)$ by any $\psi^+_{x,y,u,v}$, resp. $\psi^-_{x,y,u,v}$ where $u+v=xy$. For any allowed $u$, $v$ the automorphisms restrict to the same automorphism on $\mathfrak{sl}(1,1)$.  Now the second part of Theorem \ref{thm:res} can be proved.

\bigskip\noindent 
\textit{Proof of Theorem \ref{thm:res} (b):} The map $I_f$ takes the value $(c_1^{C^\ast}d_1^{D^\ast}\mathbb K +c_1^{D^\ast}d_1^{C^\ast}\mathbb K )\cdot\left(\begin{smallmatrix}d\\c\end{smallmatrix}\right)$. Due to (\ref{q1}) not both $c_1^{C^\ast}d_1^{D^\ast}$ and $c_1^{D^\ast}d_1^{C^\ast}$ can vanish. Furthermore  $I_f$ is invariant under the action (\ref{pauto}) and its values by Lemma \ref{342} (case $\alpha = 0$) and (\ref{q1}) yield the last necessary information for the parametrization of the isomorphy classes. The $\mathfrak S_2$-action by $\gamma$  interchanges the components of $I_2$ and $I_f$ simultaneously.  \hfill $\Box$

\bigskip\noindent 
\textbf{Example 1 (continuation).} Due to $[\mathfrak g_0,\mathfrak g_1]=0$ in $\mathfrak{sl}(1,1)$ a $G$-equivariant supercoordinate change of an RTLSG induced by a Lie supergroup is here given by 
$$ \left(\begin{array}{c} C^\ast\\D^\ast\end{array}\right) \mapsto A \cdot \left(\begin{array}{c} C^\ast\\D^\ast\end{array}\right)  , \quad  f\mapsto f\cdot (1+a \cdot  C^\ast\wedge D^\ast)$$
for $A \in GL_2(\mathbb K)$ and $a \in \mathbb K$. 
This induces a change of parameters by
\begin{align*}
\begin{pmatrix} c_1^{C^\ast}& c_1^{D^\ast}\\d_1^{C^\ast}& d_1^{D^\ast}\end{pmatrix}\mapsto \begin{pmatrix} c_1^{C^\ast} & c_1^{D^\ast}\\d_1^{C^\ast}& d_1^{D^\ast}\end{pmatrix} \cdot A^T, \quad c\mapsto det(A^{-1})(c+a), \quad d\mapsto det(A^{-1})(d+a)
\end{align*}
Starting from one RTLSG this covers by Proposition \ref{prop:sl} all RTLSG structures of $\mathfrak{sl}(1,1)$. So in this case all RTLSG can be realized as $G$-equivariant Batchelor models of the Lie supergroup.

%--------------------------------------------------------------------------
% SECTION V
%--------------------------------------------------------------------------
\section{RTLSGs over non-simply connected Lie groups}\label{sec1c}

\bigskip\noindent 
In the non-simply connected case the underlying Lie group of a $\mathfrak{gl}(1,1)$-Harish-Chandra superpair is of the form $G=\mathbb K^2/ \Gamma$ for a lattice $\Gamma$. We fix the basis $C,D$ for $\mathfrak g_{\bar 1}$.   
For any $\xi \in \mathfrak g_{\bar 0}$ with $\exp(\xi)\in \Gamma$ the representation of $\xi$ on $\mathfrak g_{\bar 1}$ by $X\mapsto[X,\cdot]$ has to lie in $\mathfrak D=\{diag(\lambda_1,\lambda_2), \ \lambda_j \in 2 \pi i \mathbb Z\}$ being trivial in the real case. By (\ref{eq:303}) this forces $\xi$ to be of the form:
\begin{align}\label{eq:xi}
 \nu (A+B)+ \pi  ik(A-B)\quad \mbox{for } \nu \in \mathbb K \mbox{ and }   k\in \mathbb Z \quad \mbox{  (}k=0\mbox{ in the real case)}
\end{align}
 A direct calculation yields that an automorphism acts on $\xi$ of the form (\ref{eq:xi})  by:
\begin{align}\label{morph}
\psi_{x,y,u,v}^\pm(\nu (A+B)+ \pi ik(A-B))=(\nu (u+v)+ \pi ik(u-v-1))(A+B)\pm  \pi ik(A-B) \ 
\end{align}
It follows:
\begin{lem}\label{lem:5}
 All $\mathfrak{gl}(1,1)$-, resp. $\mathfrak{sl}(1,1)$-RTLSGs in the simply connected case pass the quotient as well as the actions of $\mathfrak S_2$ and $\mathbb K^\times$ in Lemma \ref{342}.
\end{lem}
\begin{proof}
For an RTLSG in the simply connected case it is by (\ref{const1}) and (\ref{const2}) necessary and sufficient for passing the quotient that superfunctions of the type $f_{2,-2}$ and $f_{-2,2}$ exist globally. The good candidate of these functions on $\mathfrak g_{\bar 0}$-level is $\pm(2A^\ast-2B^\ast)$ mapping any $\xi$ of the form (\ref{eq:xi}) to  an integer multiple of $4\pi i$. Hence  $\exp(\pm(2A^\ast-2B^\ast))$ yield the wanted global functions. 
The symmetries $\pm Id$ of the lattice $\Gamma$ can be extended by (\ref{morph}) to the RTLSG automorphisms of the form $\psi_{x, x^{-1},1,0}^+$, resp. $\psi_{x,- x^{-1},0,-1}^-$ with $x \in \mathbb K^\times$. So both actions pass the quotient.
\end{proof}

\noindent 
Now it is possible to prove the remaining parts of Theorem \ref{thm:res}.

\bigskip\noindent 
\textit{Proof of Theorem \ref{thm:res} (c):} From Lemma \ref{lem:5} it follows that all RTLSGs previously described pass the quotient.  The form of $\xi$ in (\ref{eq:xi}) forces $\Gamma$ to be a lattice of maximal rank $1$ in the real and $3$ in the complex case. Together with Lemma \ref{342} equation (\ref{eq:xi}) yields that the classification stays correct on $\mathcal V_{\mathfrak{sl}}$. The condition on $u$ and $v$ for a map $\psi_{x, y,u,v}^+$ to restrict to the identity on a lattice generated by $\nu(A+B)+ \pi ik(A-B)$ is given via (\ref{morph}) by:
\begin{align}\label{eq:144}
 u(\nu+ \pi ik)+v(\nu- \pi ik)=\nu+ \pi ik
\end{align}
Hence this yields a 1-parameter group action in addition to the actions in Lemma \ref{lem:5}. The map $I_{f_g}$ yields the subspace spanned by $\left(\begin{smallmatrix}(\nu-\pi i k) d^z-(\nu+\pi i k) d^w\\(\nu-\pi i k) c^z-(\nu+\pi i k) c^w\end{smallmatrix}\right)$. The vector does not vanish since vanishing of one entry requires due to $\alpha\neq 0$ that $k\neq 0$. But via (\ref{cond}) vanishing of the vector needs $\alpha(\nu+ik)=\alpha(\nu-ik)$ requiring $k=0$. By direct calculation both entries are invariant under (\ref{pauto}) while $\psi_{x,- x^{-1},0,-1}^-$ interchanges the entries and multiplies with $-1$.
\hfill $\Box$

\bigskip\noindent 
\textit{Proof of Theorem \ref{thm:res} (d):} Note that two linearly independent elements of a lattice yield a system of equations analogue to (\ref{eq:144}) that has only one solution. Hence up to the actions in Lemma \ref{lem:5} the only acting automorphisms are those of the form (\ref{morph}) coming from lattice symmetries. This yields a discrete group action. Since $g$ and $h$ define two independent linear functions on $\mathbb C^2$, the induced $I_{f_g}$ and  $I_{f_h}$ distinguish the orbits.\hfill $\Box$

\bigskip\noindent 
\textbf{Example 2.} Regard the set of complex $\mathfrak{gl}(1,1)$-RTLSG structures  in $\mathcal V_{\mathfrak{gl}}\backslash\mathcal V_{\mathfrak{sl}}$ associated to the Harish-Chandra superpair given by the standard matrix exponential on the even part in the representation (\ref{mat}). In this case $\Gamma$ is the rank two lattice generated by the elements  $\delta_\pm:=\pi i(A+B)\pm \pi i(A-B)$. Assume that an automorphism $\psi_{x,y,u,v}^+$ of the Lie superalgebra restricts to a group automorphism of $\Gamma$. Then by (\ref{morph}) 
\begin{align*}
\psi_{x,y,u,v}^+(\delta_+)=u\delta_++(u-1)\delta_- \qquad \mbox{ and } \qquad
 \psi_{x,y,u,v}^+(\delta_-)=v\delta_++(v+1)\delta_-
\end{align*}
The restriction is an automorphism of $\Gamma$ whenever $\left(\begin{smallmatrix}u&u-1\\v&v+1\end{smallmatrix}\right) \in GL_2(\mathbb Z)$ hence $u,v \in \mathbb Z$ with $|u+v|=1$. Altogether the continuable symmetries of $\Gamma$ build the  semi-direct product  $\mathfrak S_2^2 \ltimes \mathbb Z$ where the action of $\mathfrak S_2^2$ on $\mathbb Z$ is given by the product of the signs. This group acts effectively on the set of values of the invariants in Theorem \ref{thm:res} \textit{(d)}. Its orbits classify the RTLSGs.
%--------------------------------------------------------------------------
% REFERENCES
%--------------------------------------------------------------------------
\bibliographystyle{alpha}

\vspace*{1cm}

\parbox[t]{6cm}{
Matthias Kalus\\
Fakult\"at f\"ur Mathematik\\ 
Ruhr-Universit\"at Bochum,\\ 
Universit\"atsstra\ss e 150\\ 
D-44801 Bochum, Germany\\
Matthias.Kalus@rub.de}

\end{document}